\numberwithin{equation}{section}
\def\m{\medskip}
\newtheorem{thm}{Theorem}[section]
\newtheorem{prop}[thm]{Proposition}
\newtheorem{cor}[thm]{Corollary}
\newtheorem{rem}[thm]{Remark}
\newtheorem{defi}[thm]{Definition}
\newcommand\theoref{Theorem~\ref}
\newcommand\propref{Proposition~\ref}
\newcommand\remref{Remark~\ref}
\def\cat{\operatorname{cat}}
\def\pt{\operatorname{pt}}
\begin{document}

\title[LS Category of connected sums]{On Lusternik-Schnirelmann Category of Connected Sums}

\author{Robert J. Newton } 
\address{Department of Mathematics, 
University of Florida,
Gainesville, Florida 32608}

\begin{abstract}
  In this paper we estimate the Lusternik-Schnirelmann category of the connected sum of two manifolds through their categories. We achieve a more general result regarding the category of a quotient space $X/A$ where $A$ is a suitable subspace of $X$.
\end{abstract}

\maketitle

\section{Introduction}

\begin{defi}{\rm The Lusternik-Schnirelmann category (LS category) of a space $X$ is the smallest nonnegative integer $n$ such that there exists \{$A_0, A_1, ... , A_n$\}, an open cover of $X$ with each $A_i$ contractible in X. This is denoted by $n= \cat X$.}
\end{defi}

Following this definition, spaces with LS category 0 are contractible.

\m The goal of the paper is to prove the inequality\\
\begin{equation}\label{eq:main}
\max\{\cat  M, \cat N\} -1 \leq \cat(M\# N) \leq \max\{\cat M, \cat N\},
\end{equation} where $M$ and $N$ are closed manifolds.

\m To prove the inequality, we consider a more general problem about the relation of $\cat X$ and $\cat(X/A)$. This problem is indeed more general: in fact, put $X=M\#N$ and $A$ be an $(n-1)$ sphere that separates $M$ and $N$ (with removed discs). Then $X/A=M\vee N$ and  $\cat (M\vee N)=\max\{\cat M, \cat N\}$.

\m All spaces are assumed to be CW spaces.

\m {\bf Acknowledgments:} {A special thanks my advisor, Dr. Yuli Rudyak, and to Dr. Alexander Dranishnikov for their continued support and patience.}

\section{Preliminaries}

\begin{defi}\rm  For a path-connected space $X$ with basepoint $x_0$, we define $PX$ to be the set of all continuous functions $\gamma : I\rightarrow X$ satisfying $\gamma (0)=x_0$ topologized by the compact-open topology.
\end{defi}

We then define $p:PX\rightarrow X$ given by $p(\gamma)=\gamma (1)$, a fibration with base space $X$ and fiber $\Omega(X)$, the loop space of $X$.\\

Given $f:Y\rightarrow X$ and $g:Z\rightarrow X$ we can define $Y*_X Z = \{(y, z, t)\in Y*Z | f(y) = g(z)\}$ and $(f*_X g):Y*_X Z\to X$ by $(f*_X g)(y, z, t)= f(y)$.
\\

From this, we define $P_n X$ to be the fiberwise join of $n$ copies of $PX$ over $p:PX\rightarrow X$ defined above and denote the fiberwise map as 
\begin{equation}\label{p_n}
p_n^X:P_n X\rightarrow X.
\end{equation}
 Note that the (homotopy) fiber of $p_n^X$ is $\Omega(X)^{*n}$, the n-fold join of $\Omega(X)$.

\m We need the following theorem of Schwarz, see~\cite{CLOT, Sv}.

\begin{thm}\label{t:Sv} The inequality  $\cat(X) \leq n$ holds iff there exists a section $s:X\rightarrow P_{n+1}X$ to $p:P_{n+1}X\rightarrow X$.
\end{thm}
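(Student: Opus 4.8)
The plan is to establish both implications directly, using the standard dictionary relating three kinds of data attached to an open set $U\subseteq X$: (i) a nullhomotopy in $X$ of the inclusion $U\hookrightarrow X$; (ii) a section over $U$ of the path fibration $p\colon PX\to X$; and (iii) a single ``slot'' of a section of $p_{n+1}^X$ over $U$. The link between (i) and (ii) is that a section of $p$ over $U$ is exactly a continuous choice, for each $u\in U$, of a path in $X$ from $x_0$ to $u$, which is the same data as a homotopy $U\times I\to X$ from the inclusion to the constant map at $x_0$; and since $X$ is path-connected, ``$U$ is contractible in $X$'' (the inclusion is nullhomotopic to some point) is equivalent to ``the inclusion is nullhomotopic to $x_0$'' by concatenating with a fixed path. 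Hence $U$ is contractible in $X$ if and only if $p$ admits a section over $U$. I would state this equivalence once and invoke it in both directions.

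($\Rightarrow$) Assume $\cat X\le n$ and fix an open cover $\{A_0,\dots,A_n\}$ of $X$ with each $A_i$ contractible in $X$; by the equivalence above, choose sections $s_i\colon A_i\to PX$ of $p$. Since $X$ is a CW space it is paracompact, so there is a partition of unity $\{\varphi_i\}_{i=0}^{n}$ with $\operatorname{supp}\varphi_i\subseteq A_i$. Define $s\colon X\to P_{n+1}X$ by letting $s(x)$ be the point of the fiberwise join whose barycentric coordinates are $(\varphi_0(x),\dots,\varphi_n(x))$ and whose $i$-th path-slot is $s_i(x)$ whenever $\varphi_i(x)>0$. This is a legitimate point of $P_{n+1}X$: on $\operatorname{supp}\varphi_i$ one has $x\in A_i$ and $p(s_i(x))=x$, so every active slot lies over the common point $x$, which is then also $p_{n+1}^X(s(x))$; thus $s$ is a section of $p_{n+1}^X$ once continuity is verified.

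($\Leftarrow$) Assume $s\colon X\to P_{n+1}X$ is a section of $p_{n+1}^X$. Post-composing with the $i$-th barycentric-coordinate function of the join yields continuous maps $\tau_i\colon X\to[0,1]$ with $\sum_{i}\tau_i\equiv 1$; set $A_i=\tau_i^{-1}\big((0,1]\big)$. These are open and cover $X$. Over $A_i$ the $i$-th coordinate is strictly positive, so there the $i$-th path-slot of $s(x)$ is well defined and continuous, giving a section of $p\colon PX\to X$ over $A_i$; by the equivalence above each $A_i$ is contractible in $X$, and $\{A_0,\dots,A_n\}$ exhibits $\cat X\le n$.

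The step I expect to be the main obstacle is the continuity of the map $s$ constructed in the forward direction, precisely at the ``seams'': near a point $x$ with $\varphi_i(x)=0$ the slot $s_i$ need not even be defined, yet $s$ must remain continuous there. This is exactly what the join topology is built to accommodate, but it requires a small argument — I would check continuity of $s$ separately on each member of the open cover $\{\varphi_i>0\}$ of $X$, using that on $\{\varphi_i>0\}$ the formula for the $i$-th slot is continuous while at a point where $\varphi_i$ vanishes the $i$-th slot contributes nothing to the join coordinate and may be ignored. The basepoint bookkeeping in the dictionary and the existence of the partition of unity are routine, given that all spaces are CW.
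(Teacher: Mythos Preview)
The paper does not supply its own proof of this theorem: it is stated as a classical result of Schwarz and referenced to \cite{CLOT, Sv}, so there is no in-paper argument to compare against. Your proposal is essentially the standard proof one finds in those references (identifying $\cat X$ with the sectional category, or Schwarz genus, of the path fibration $p\colon PX\to X$, and then using the partition-of-unity characterisation of sectional category in terms of sections of fiberwise joins). The dictionary you set up between nullhomotopies of $U\hookrightarrow X$ and local sections of $p$ is exactly the right bridge, and both implications are handled correctly.

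One small remark on the point you flagged as the main obstacle. Your suggested fix --- checking continuity of $s$ on each set $\{\varphi_i>0\}$ separately --- does not quite settle the issue, because on $\{\varphi_j>0\}$ some other coordinate $\varphi_i$ may still be passing through zero, and $s_i$ need not be defined off $A_i$. The clean way to phrase it is to use the Milnor (coarse) topology on the join: a map into $Y_0*\cdots*Y_n$ is continuous precisely when each barycentric coordinate $t_i$ is continuous on all of $X$ and each slot map $x\mapsto y_i(x)$ is continuous on the open set $\{t_i>0\}$. Since $\{\varphi_i>0\}\subseteq\operatorname{supp}\varphi_i\subseteq A_i$, the slot $s_i$ is defined and continuous exactly where it needs to be, and the join topology does the rest. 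With that adjustment your argument is complete.
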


\begin{rem}\label{r:help}\rm These claims are well-known, we list them here for reference.

\item(1) $\cat(X\vee Y) = \max\{\cat X, \cat Y\}$;
\item(2) $\cat(X\cup Y)\leq \cat(X) + \cat(Y) + 1$;
\item(3)  $\cat(X/A)-1\leq \cat X $. This follows from the fact that $X/A$ has homotopy type of $X\cup CA$, the union of $X$ with the cone over $A$, and item (2).

\end{rem}

It should be noted that Berstein and Hilton explored the changes in category of a space after attaching a cone in \cite{BH} following Hilton's exploration of what's now known as the Hilton-Hopf invariant in \cite{Hi}.

\section{Main results}

\begin{prop}\label{p:smallcat} The inequality \eqref{eq:main} holds whenever $\max\{\cat  M, \cat N\}\leq 2$
\end{prop}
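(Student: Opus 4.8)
Throughout, write $n=\dim M=\dim N$ and $M_0=M\setminus\mathring{D}^n$, $N_0=N\setminus\mathring{D}^n$, so that $M\#N=M_0\cup_{S^{n-1}}N_0$. The left-hand inequality in \eqref{eq:main} needs no hypothesis: taking for $A$ the separating $(n-1)$-sphere of $X=M\#N$ one has $X/A\simeq M\vee N$, so $\max\{\cat M,\cat N\}-1=\cat(M\vee N)-1\le\cat(M\#N)$ by \remref{r:help}(1) and (3). Thus the proposition reduces to the estimate $\cat(M\#N)\le k$, with $k:=\max\{\cat M,\cat N\}\le2$, and I would treat the cases $k=0,1,2$ in turn. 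The value $k=0$ cannot occur: by Poincar\'e duality a positive-dimensional closed manifold is not contractible.

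Suppose $k=1$. Then $M$ and $N$ are co-$H$-spaces, so all cup products of positive-degree classes vanish; Poincar\'e duality with field coefficients forces $H^i(M)=0$ for $0<i<n$, and, $\pi_1(M)$ being free with zero abelianization (hence trivial for $n\ge2$, while $M=S^1$ for $n=1$), the Hurewicz and Whitehead theorems give $M\simeq S^n$, and likewise $N\simeq S^n$. Then $M_0$ and $N_0$ are contractible, so a Mayer--Vietoris and van Kampen computation shows $M\#N$ is a homotopy $n$-sphere and $\cat(M\#N)=1=k$.

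Suppose $k=2$. If one summand, say $M$, has $\cat M\le1$, then $M_0$ is contractible (as above) and $M\#N=M_0\cup_{S^{n-1}}N_0\simeq N_0/S^{n-1}\simeq N$, whence $\cat(M\#N)=\cat N\le2$. So assume $\cat M=\cat N=2$. Realize $M\#N$ as $U\cup V$ with $U$, $V$ open, $U\simeq M_0$, $V\simeq N_0$, and $U\cap V$ an open collar of the separating sphere, so $U\cap V\simeq S^{n-1}$. For a closed manifold the top cell contributes exactly one to the category, so $\cat M_0=\cat N_0=1$; concatenating two-element categorical covers of $U$ and $V$ and invoking \remref{r:help}(2) gives only $\cat(M\#N)\le\cat U+\cat V+1\le3$. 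The plan is to cut this down to three sets: using that $U\cap V$ deformation retracts to an $(n-1)$-sphere, which has category $1$, one should be able to amalgamate one categorical set from the $U$-side with one from the $V$-side along the collar, producing a categorical open cover of $M\#N$ of size three and hence $\cat(M\#N)\le2=k$.

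The step I expect to be the main obstacle is precisely this amalgamation along the collar. The separating sphere need not be null-homotopic in $M\#N$ (it is not, for example, for a closed surface of genus $\ge2$), so one cannot simply swallow a neighborhood of it into a single categorical set; the three sets must be chosen from the start so that the $M$-side and $N$-side pieces meet only inside the collar and their traces there, lying in a space of category $1$, can be fused. An alternative, perhaps cleaner, route to the same end is to use Schwarz's criterion (\theoref{t:Sv}): build a section of $p_3^{M\#N}:P_3(M\#N)\to M\#N$ by combining a section over $M\vee N$ (which exists since $\cat(M\vee N)=2$) with the fiberwise-join structure, the collapsed $(n-1)$-sphere costing only one extra because $\cat(S^{n-1})=1$. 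Making either argument precise — together with the input $\cat(M_0)=\cat M-1$ for closed manifolds — is where the real content of the $k=2$ case lies.
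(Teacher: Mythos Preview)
Your treatment is far more elaborate than the paper's. The paper's entire argument is three sentences: if $\cat M=\cat N=1$ then $M$ and $N$ are homotopy spheres and so is $M\#N$; conversely, $\cat(M\#N)=1$ forces $M$ and $N$ to be homotopy spheres; hence $\cat(M\#N)=\max\{\cat M,\cat N\}$ whenever the maximum is at most $2$. Read literally this establishes only the biconditional $\cat(M\#N)=1\iff\max=1$, which for $\max=2$ yields $\cat(M\#N)\ge 2$ but not the needed upper bound $\cat(M\#N)\le 2$. So the paper is terse to the point of leaving the $\max=2$ upper bound unargued, and your instinct that this is where the real content lies is sound.

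That said, your attack on the case $\cat M=\cat N=2$ rests on a false premise. You assert that ``for a closed manifold the top cell contributes exactly one to the category, so $\cat M_0=\cat N_0=1$,'' but this is not true in general: Rivadeneyra~\cite{R} gives only $\cat(M\setminus\pt)\le\cat M$, and the paper itself cites~\cite{LSV} for a closed manifold with $\cat(M\setminus\pt)=\cat M$. Thus you cannot conclude $\cat M_0=1$ from $\cat M=2$, and the three-set amalgamation you sketch has no guaranteed two-set covers of $U$ and $V$ to start from. Your alternative route via \theoref{t:Sv} is not developed enough to evaluate. In short, you have correctly located the gap --- the same one the paper glosses over --- but your proposed repair contains an error and remains, as you yourself concede, incomplete.
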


\begin{proof}If $\cat M=1=\cat N$ then $M$ and $N$ are homotopy spheres, and so $M\# N$ is. Conversely, if $\cat M\# N=1$ then $M$ and $N$ must be homotopy spheres.  Thus, we proved that $\cat(M\# N)=\max\{\cat M, \cat N\}$ if $\max\{\cat M, \cat N\}\le 2$.
\end{proof}

\begin{thm}\label{t:main} Suppose $X$ is an $n$-dimensional space with m-connected subspace $A$, with $3 \leq \cat(X/A)  \leq k$, and $k+m-1\geq$n. Then $\cat X \leq k$.
\end{thm}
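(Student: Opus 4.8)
The plan is to deduce the bound from Schwarz's criterion (\theoref{t:Sv}) by transporting a Schwarz section from $X/A$ to $X$ along the quotient map $q\colon X\to X/A$, via an obstruction-theoretic lifting argument. Since $\cat(X/A)\le k$, \theoref{t:Sv} supplies a section $s\colon X/A\to P_{k+1}(X/A)$ of $p_{k+1}^{X/A}$. Naturality of the construction $P_{k+1}(-)$ gives a map $P_{k+1}q\colon P_{k+1}X\to P_{k+1}(X/A)$ covering $q$; one cannot hope to lift $s\circ q$ directly through $P_{k+1}q$, since the homotopy fibre of $P_{k+1}q$ is only as connected as $q$ itself, so instead I would form the pullback $E:=q^{*}P_{k+1}(X/A)$ with projection $\pi_E\colon E\to X$. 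Then $s$ pulls back to a section $\bar s\colon X\to E$ of $\pi_E$, and the universal property of the pullback yields a map $\psi\colon P_{k+1}X\to E$ over $X$ with $\pi_E\circ\psi=p_{k+1}^{X}$. If $\bar s$ lifts through $\psi$ to some $\ell\colon X\to P_{k+1}X$, then $p_{k+1}^{X}\circ\ell=\pi_E\circ\psi\circ\ell\simeq\pi_E\circ\bar s=\mathrm{id}_X$, so $\ell$ is a section of $p_{k+1}^{X}$ up to homotopy and hence, $p_{k+1}^{X}$ being a fibration, $p_{k+1}^{X}$ admits a genuine section; \theoref{t:Sv} then gives $\cat X\le k$. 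Thus everything reduces to the lifting problem for $\bar s$ along $\psi$.

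The heart of the matter is that $\psi$, being a map of fibrations over $X$, has homotopy fibre equal to that of the induced map of fibres $(\Omega q)^{*(k+1)}\colon\Omega(X)^{*(k+1)}\to\Omega(X/A)^{*(k+1)}$. First I would check that $q$ is an $(m+1)$-connected map: the long exact sequence of $(X,A)$ gives $\pi_i(X)\xrightarrow{\ \cong\ }\pi_i(X,A)$ for $i\le m$ (and an epimorphism for $i=m+1$) because $A$ is $m$-connected, while homotopy excision (Blakers--Massey) applied to the pushout $X/A\simeq X\cup_A CA$, for which $(CA,A)$ is $(m+1)$-connected, identifies $\pi_i(X,A)$ with $\pi_i(X/A)$ in the same range. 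Hence $\Omega q$ is $m$-connected. Next, using the identification $Y_1*\cdots*Y_{k+1}\simeq\Sigma^{k}(Y_1\wedge\cdots\wedge Y_{k+1})$ together with the fact that smashing a $c$-connected map with any nonempty based spaces keeps it $c$-connected, one finds that $(\Omega q)^{*(k+1)}$ is at least $(m+k)$-connected, so its homotopy fibre $F$ is at least $(m+k-1)$-connected. The hypothesis $k+m-1\ge n$ then makes $F$ at least $(n-1)$-connected; and the standing assumption $\cat(X/A)\ge 3$, i.e.\ $k\ge 3$, makes the spaces $\Omega(X)^{*(k+1)}$, $\Omega(X/A)^{*(k+1)}$ and $F$ simply connected, so the obstruction theory below is free of local-coefficient subtleties.

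Replacing $\psi$ by a fibration with fibre $F$, the obstructions to lifting $\bar s\colon X\to E$ lie in $H^{i+1}(X;\pi_i(F))$. These groups vanish for $i\ge n$ since $\dim X=n$, and for $i\le n-1$ since $F$ is $(n-1)$-connected, so every obstruction vanishes and the lift $\ell$ exists; by the first paragraph this yields $\cat X\le k$.

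I expect the main obstacle to be the connectivity bookkeeping of the second step: proving that $q$ is $(m+1)$-connected (the only place $m$-connectedness of $A$ enters, through homotopy excision) and then that the $(k+1)$-fold join of $\Omega q$ has homotopy fibre sufficiently connected relative to $n=\dim X$. It is precisely in balancing the $k$ suspensions appearing in $Y_1*\cdots*Y_{k+1}\simeq\Sigma^{k}(Y_1\wedge\cdots\wedge Y_{k+1})$ against $\dim X$ that the inequality $k+m-1\ge n$ is consumed; a secondary technical nuisance will be verifying that $\psi$ and its homotopy fibre are tame enough (in particular simply connected) for classical obstruction theory to apply, which is exactly what the hypothesis $\cat(X/A)\ge 3$ secures.
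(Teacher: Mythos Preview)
Your proposal is correct and follows essentially the same route as the paper: pull back the Schwarz section along $q$, compare $P_{k+1}X$ to the pullback $E$ via the canonical map, use homotopy excision to get the connectivity of $q$ and hence of the induced map on iterated joins of loop spaces, and then lift the pulled-back section using the dimension bound. The only cosmetic difference is in the final lifting step: the paper applies the $5$-lemma to the long exact sequences of the two fibrations over $X$ to see that $h_*$ is an isomorphism through degree $m+k-1$ and surjective in degree $m+k$, then invokes Whitehead to produce a right inverse over the $n$-skeleton, whereas you replace $\psi$ by a fibration and argue directly that the obstruction groups $H^{i+1}(X;\pi_i F)$ vanish---these are equivalent formulations of the same argument.
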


\begin{proof}  For sake of simplicity, put $p=p^{X/A}_{k+1}$ and $p'=p_{k+1}^X$, cf.~\eqref{p_n}.  As $\cat(X/A) \leq k$, and by \theoref{t:Sv}, there exists the following section $s$ with $ps=1_{X/A}$.

\hspace{2in}\xymatrix{ P_{k+1}(X/A) \ar[d]_p	\\  
X/A 	 \ar@/_/@{.>}[u]_s}

Now we consider the collapsing  map $q:X\rightarrow X/A$, and get the fiber-pullback diagram.
\begin{equation}
\CD
E @>f>> P_{k+1}(X/A) @= P_{k+1}(X/A)\\
@VVV @VpVV @AAsA\\
X @>>> X/A @= X/A
\endCD
\end{equation}

\m

Now consider $P_{k+1}X$.  We already have $p':P_{k+1}X\rightarrow X$, and the collapsing map $q:X\rightarrow X/A$ induces a map $q':P_{k+1}X\rightarrow P_{k+1}(X/A)$. Since $pq'=gp'$ and the square is the pull-back diagram, we get a map $h: P_{k+1}\to X$  such that  the following diagram commutes.
\\

\hspace{1.5in}\xymatrix{ P_{k+1}X \ar@/_/[ddr]_{p'} \ar@/^/[drr]^{q'} \ar@{.>}[dr]^h
\\ & E \ar[d] \ar[r]^f & P_{k+1}(X/A) \ar[d]_p	
\\ & X \ar[r]^q \ar@/_/@{.>}[u]_{s'}	& X/A 	 \ar@/_/@{.>}[u]_s}
\\

\m

Recall that our goal is to prove $\cat X \le k$. Because of Schwarz's \theoref{t:Sv}, it suffices to construct a section of $p'$. To do this, it suffices in turn to construct a section of the map $h: P_{k+1}(X)\to E$. Moreover, since $\dim X=n$, it suffices to construct a section of $h$ over the $n$-skeleton $E^{(n)}$ of $E$, i.e., to construct a map $\phi: E^{(n)}\to P_{k+1}(X)$ with $h\phi=1_E$.
\\

\m By homotopy excision~\cite[Prop. 4.28]{Ha},  and because $A$ is $m$-connected, the quotient map $q:X\to X/A$ induces isomorphisms  $q_*:\pi_n(X)\rightarrow \pi_n(X/A)$  for $n\leq m$ and epimorphism for $n=m+1$. So,  $\pi_n(\Omega X)\rightarrow \pi_n(\Omega(X/A))$ is an isomorphism for $n \leq (m-1)$ and epimorphism for $n=m$. Therefore $(\Omega X)^{*(k+1)}\rightarrow (\Omega(X/A))^{*(k+1)}$ is an isomorphism for $n$ $\leq$ $m+k$ because of \cite[Prop. 5.7]{DKR}.

\m
The long exact sequence of homotopy groups for a fibration yields the following 
commutative diagram

\hspace{1in}\xymatrix{ ...   \ar[r] & \pi_i((\Omega X)^{*(k+1)}) \ar[r] \ar[d]^\cong &  \pi_i(P_{k+1}X) \ar[r] \ar@{.>}[d]^{h_*} & \pi_i(X) \ar[r] \ar[d]^\cong & ...\\
 ...   \ar[r] & \pi_i((\Omega X)^{*(k+1)}) \ar[r] &  \pi_i(E) \ar[r] & \pi_i(X) \ar[r] & ...}
\\
\\
\\
By the 5-lemma, the map $h_*$ is an isomorphism for $i \leq (m+k-1)$ and epimorphism for $n=m+k$. So by Whitehead's theorem, there exists a map $\phi:E^{(n)}\rightarrow P_{k+1}X$. Now,  the composition $(\phi \circ s')$ is a section to $p':P_{k+1}\to X$. Thus $\cat X \leq k$.
\end{proof}

\m
Combining this with the previous \remref{r:help} gives the following inequality:
$$
\cat(X/A)-1 \leq \cat(X) \leq \cat(X/A)
$$

under the dimension-connectivity conditions from \theoref{t:main}.

\m Consider the case where $X = M\#N$, the connected sum of $n$-dimensional manifolds $M$ and $N$, and $A= S^{n-1}$ is the separating sphere between $M$ and $N$. Then $X/A = M\vee N$, and $\cat (X/A)=\max\{\cat M, \cat N\}$. We have $A$ is $(n-2)$-connected, and we can assume $\cat M, \cat N \geq  3$ because of \propref{p:smallcat}. Then $\cat (X/A) \geq  3$, and so as $(n-2) + 3 - 1 \geq n$, we are in the case of \theoref{t:main} and get the following corollary.

\begin{cor}\label{c:maincor} There is a double inequality
\begin{equation*}
\max\{\cat  M, \cat N\} -1 \leq \cat(M\# N) \leq \max\{\cat M, \cat N\}.
\end{equation*}
\end{cor}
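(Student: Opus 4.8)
The plan is to deduce Corollary~\ref{c:maincor} directly from Theorem~\ref{t:main} together with the bookkeeping in Remark~\ref{r:help}, so the work is almost entirely in setting up the hypotheses correctly rather than in any new argument. First I would reduce to the case $\max\{\cat M,\cat N\}\geq 3$: by Proposition~\ref{p:smallcat} the double inequality is already known (in fact as an equality) when $\max\{\cat M,\cat N\}\leq 2$, so nothing remains to prove there. Next I would identify the geometric input: writing $M\#N$ as $(M\setminus D^n)\cup_{S^{n-1}}(N\setminus D^n)$, the sphere $A=S^{n-1}$ is a codimension-zero-boundary separating hypersurface, and collapsing it yields $X/A\simeq M\vee N$. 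This requires noting that $M\setminus D^n$ deformation retracts onto $M$ minus a point's worth of skeleton appropriately — more precisely that $(M\setminus D^n)/S^{n-1}\simeq M$ and similarly for $N$ — so that $X/A=(M\setminus D^n)/S^{n-1}\vee (N\setminus D^n)/S^{n-1}\simeq M\vee N$. Then Remark~\ref{r:help}(1) gives $\cat(X/A)=\max\{\cat M,\cat N\}$.

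With that identification in hand, the upper bound is the content of Theorem~\ref{t:main}, provided its dimension–connectivity hypothesis holds. Here $X=M\#N$ is an $n$-manifold, hence an $n$-dimensional CW complex, so I take the ``$n$'' of Theorem~\ref{t:main} to be $n$. The subspace $A=S^{n-1}$ is $(n-2)$-connected, so $m=n-2$. Setting $k=\max\{\cat M,\cat N\}=\cat(X/A)$, the hypothesis $3\leq\cat(X/A)\leq k$ holds trivially (with equality on the right, and the left by our reduction), and the inequality $k+m-1\geq n$ becomes $k+(n-2)-1\geq n$, i.e. $k\geq 3$, which again holds by the reduction. So Theorem~\ref{t:main} applies and gives $\cat(M\#N)=\cat X\leq k=\max\{\cat M,\cat N\}$, the right-hand inequality.

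For the lower bound I would invoke Remark~\ref{r:help}(3): $\cat(X/A)-1\leq\cat X$, which is the statement $\max\{\cat M,\cat N\}-1\leq\cat(M\#N)$ after substituting the identification $\cat(X/A)=\max\{\cat M,\cat N\}$. Stringing the two bounds together yields exactly
\[
\max\{\cat M,\cat N\}-1\;\leq\;\cat(M\#N)\;\leq\;\max\{\cat M,\cat N\},
\]
and combining with the equality case of Proposition~\ref{p:smallcat} when $\max\{\cat M,\cat N\}\leq 2$ completes all cases.

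The routine parts here are genuinely routine — substituting numbers into the hypothesis of Theorem~\ref{t:main} and citing Remark~\ref{r:help}. The one place where care is needed, and what I expect to be the main (if minor) obstacle, is the homotopy identification $X/A\simeq M\vee N$: one must check that collapsing the separating $S^{n-1}$ in the connected sum really does give the wedge up to homotopy, i.e. that $(M\setminus \mathrm{int}\,D^n)/\partial D^n\simeq M$. This is standard for closed manifolds (the punctured manifold with its boundary sphere collapsed recovers $M$ up to homotopy, since attaching the top cell back is undone by the collapse), but it is the only step that uses anything about manifolds beyond ``finite-dimensional CW complex,'' and it is where the cleanliness of the whole reduction rests. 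Everything else is arithmetic with the constants $n$, $m=n-2$, and $k\geq 3$.
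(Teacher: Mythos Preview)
Your proposal is correct and follows essentially the same route as the paper: set $X=M\#N$, $A=S^{n-1}$, identify $X/A\simeq M\vee N$, use \propref{p:smallcat} to reduce to $\max\{\cat M,\cat N\}\ge 3$, check the numerical hypothesis $k+m-1\ge n$ with $m=n-2$ and $k\ge 3$, and invoke \theoref{t:main} for the upper bound and \remref{r:help}(3) for the lower. If anything, your reduction is stated a bit more carefully than the paper's (which writes ``$\cat M,\cat N\ge 2$'' where ``$\max\{\cat M,\cat N\}\ge 3$'' is what is actually needed and used).
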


\begin{proof}  Consider the case where $X = M\#N$, the connected sum of $n$-dimensional manifolds $M$ and $N$, and $A= S^{n-1}$ is the separating sphere between $M$ and $N$. Then $X/A = M\vee N$, and $\cat (X/A)=\max\{\cat M, \cat N\}$. We have $A$ is $(n-2)$-connected, and we can assume $\cat M, \cat N \geq  2$ because of \propref{p:smallcat}. Then $\cat (X/A) \geq  3$, and so as $(n-2) + 3 - 1 \geq n$, we are in the case of \theoref{t:main} and get the corollary.
\end{proof}

\begin{rem}\rm In \cite{H}, an upper bound is given for the LS category of a double mapping cylinder. If we consider  the connected sum of $n$-manifolds $M$ and $N$ as such a double mapping cylinder, then the following inequality is obtained:
\begin{equation}\label{eq:hardie}
\cat M\#N\leq\min\{1 + \cat M' + \cat N', 1 + \max\{\cat M', \cat N'\}\}.
\end{equation}
Here $M'$ and $N'$ are $M \smallsetminus \pt$ and $N\smallsetminus \pt$, respectively. 

Rivadeneyra proved in \cite{R} that category of a manifold without bondary does not increase when a point is removed. If the categories of $M$ and $N$ do decrease by one when a point is removed, then \eqref{eq:hardie} has already established the main result here. However, in \cite{LSV} a closed manifold is constructed so that the category remains unchanged after the deletion of a point, and \theoref{t:main} gives an improvement of the category estimate for such a case.
\end{rem}

\begin{rem}\rm It is unknown if there is an example of two manifolds $M$ and $N$ such that  $\cat M\#N = \max\{\cat  M, \cat N\} -1$. 
\end{rem}

\

\section{Connected sum and Toomer invariant}

\begin{defi}\rm The Toomer invariant of $X$ $ e(X)$ is the least integer $k$ for which the map $p_{n}^{*}:H^*(X)\to H^*(P_n(X))$ is injective, see \cite{CLOT}. It follows that $e(X) \le 
\cat X$.
\end{defi}

\begin{prop}\label{Toom}For closed and oriented manifolds $M$ and $N$, $e(M\#N)\geq \max\{e(M),e(N)\}.$
\end{prop}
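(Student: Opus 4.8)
The plan is to exploit the fact that the connected sum $M\#N$ admits degree-one maps onto both $M$ and $N$, collapsing the complementary summand to a point. Concretely, let $q_M\colon M\#N\to M$ be the map that collapses $N'=N\smallsetminus\mathrm{int}(D^n)$ to a point (and similarly $q_N\colon M\#N\to N$); since $M$ and $N$ are closed oriented $n$-manifolds, each $q_M$ and $q_N$ has degree one, hence induces an injection on cohomology $q_M^*\colon H^*(M)\hookrightarrow H^*(M\#N)$ (a standard consequence of Poincaré duality: a degree-one map between closed oriented manifolds is injective on $H^*$ because of the naturality of cap product with the fundamental class). The Toomer invariant is monotone under such maps, and that monotonicity is what I would establish next.

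The key step is the naturality of the fibration $p_\bullet$ with respect to the collapsing maps. For any map $f\colon X\to Y$ there is an induced map $f_\bullet\colon P_n(X)\to P_n(Y)$ over $f$, fitting into a commuting square with $p_n^X$ and $p_n^Y$; this is exactly the kind of induced map used in the proof of \theoref{t:main} (the map $q'$ there). Applying this to $f=q_M$ gives a commuting square relating $p_n^{M\#N}$ and $p_n^{M}$, and hence, after taking cohomology, a commuting square
\[
\begin{CD}
H^*(M) @>{q_M^*}>> H^*(M\#N)\\
@V{(p_n^M)^*}VV @VV{(p_n^{M\#N})^*}V\\
H^*(P_n M) @>{(q_{M,\bullet})^*}>> H^*(P_n(M\#N)).
\end{CD}
\]
Now take $n=e(M\#N)$. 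Then $(p_n^{M\#N})^*$ is injective by definition of the Toomer invariant, and $q_M^*$ is injective by the degree-one argument above; hence the composite $(p_n^{M\#N})^*\circ q_M^*=(q_{M,\bullet})^*\circ (p_n^M)^*$ is injective, which forces $(p_n^M)^*\colon H^*(M)\to H^*(P_n M)$ to be injective. By definition of $e(M)$ as the least such $n$, this yields $e(M)\le e(M\#N)$. The identical argument with $q_N$ gives $e(N)\le e(M\#N)$, and combining the two inequalities gives $e(M\#N)\ge\max\{e(M),e(N)\}$.

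The main obstacle I anticipate is making precise the claim that $q_M^*$ is injective and, more importantly, that the square above genuinely commutes — i.e., verifying that the construction $X\mapsto P_n(X)$ is functorial enough that $q_{M,\bullet}$ exists and is compatible with $p_n$ on the nose (up to the homotopies that are already implicit in Schwarz's setup). The functoriality of the fiberwise join of path fibrations is standard but slightly fiddly to set up carefully; once it is in place, everything else is a diagram chase together with the elementary Poincaré-duality fact about degree-one maps. No dimension or connectivity hypotheses are needed here, in contrast to \theoref{t:main}, because we are only pushing cohomology classes forward along collapsing maps rather than trying to lift sections.
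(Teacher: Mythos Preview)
Your proof is correct and follows essentially the same route as the paper: use the degree-one collapse map $M\#N\to M$ (respectively $N$), invoke injectivity of the induced map on cohomology, and chase the commutative square coming from the naturality of $p_n$ to force $(p_n^M)^*$ to be injective whenever $(p_n^{M\#N})^*$ is. The paper's version is terser about the functoriality of $P_n$ that you flag as the main obstacle, but otherwise the arguments coincide.
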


\begin{proof}
Consider $f:M\#N\to M$ the collapsing map onto $M$. Then we have the following diagram.

\hspace{1.5in}\xymatrix{ & H^*(P_nM)  \ar[r] & H^*(P_{n}(M\#N))
\\ & H^*(M)  \ar[u] \ar@{>->}[r]^{f^*}	& H^*(M\#N) \ar[u]	 }
\\

This map has degree 1, and so $f^*:H^*(M)\to H^*(M\#N)$ is injective \cite[Theorem V, 2.13]{Rudbook}.  Also suppose $p_n^*:H^*(M\#N)\to H^*(P_n(M\#N)$ is injective. Consider $u\in H^*(M)$. As $f^*$ and $p_n^*$ are injective, $p_n^*(u)\in H^*(P_nM)$ is nonzero, and so $p_n^*:H^*(M)\to H^*(P_nM)$ is injective, and similarly for $N$. And so $e(M\#N)\geq \max\{e(M),e(N)\}.$
\end{proof}

\begin{prop}For closed, oriented manifolds $M$ and $N$, if $\cat M = e(M)$ and $\cat N = e(N)$, then $\cat (M\#N)= \max\{ \cat M, \cat N\}$. 
\end{prop}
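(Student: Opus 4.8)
The plan is to deduce this directly from the two estimates already proved, without any further homotopy-theoretic work. Write $c=\max\{\cat M,\cat N\}$ and assume without loss of generality that $c=\cat M$. By \corref{c:maincor}, which applies to any closed manifolds (the low-category cases being covered by \propref{p:smallcat}), we already have the upper bound $\cat(M\#N)\le c$. So the only thing left to establish is the reverse inequality $\cat(M\#N)\ge c$.

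For the lower bound I would invoke the Toomer invariant. By the general inequality $e(X)\le\cat X$ applied to $X=M\#N$, together with \propref{Toom} (valid because $M$ and $N$ are closed and oriented), we get
$$
\cat(M\#N)\ \ge\ e(M\#N)\ \ge\ \max\{e(M),e(N)\}.
$$
Now the hypotheses $\cat M=e(M)$ and $\cat N=e(N)$ turn the right-hand side into $\max\{\cat M,\cat N\}=c$. Hence $\cat(M\#N)\ge c$, and combining with the upper bound gives $\cat(M\#N)=c=\max\{\cat M,\cat N\}$.

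I do not expect a genuine obstacle here: the argument is a one-line chain of inequalities once \corref{c:maincor} and \propref{Toom} are in hand. The only points requiring care are bookkeeping about hypotheses — namely that orientability is what makes the collapse map $M\#N\to M$ degree one (so that \propref{Toom} applies) and that closedness of $M$ and $N$ is what licenses the upper bound from \corref{c:maincor}. If one wished to weaken the statement, the same proof shows more generally that $\cat(M\#N)\ge\max\{e(M),e(N)\}$ with no assumption relating $\cat$ and $e$, so the equality conclusion persists whenever $\max\{e(M),e(N)\}=\max\{\cat M,\cat N\}$.
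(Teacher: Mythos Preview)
Your proof is correct and follows essentially the same approach as the paper: both arguments chain together the upper bound $\cat(M\#N)\le\max\{\cat M,\cat N\}$ from \corref{c:maincor} with the lower bound $\cat(M\#N)\ge e(M\#N)\ge\max\{e(M),e(N)\}$ from \propref{Toom}, and then close the loop using the hypotheses $e(M)=\cat M$ and $e(N)=\cat N$. The paper compresses this into a single displayed chain of inequalities, but the content is identical to what you wrote.
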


\begin{proof}Combining the assumptions $e(M)=\cat M$ and $e(N)=\cat N$ with the inequality $\max \{e(M), e(N)\} \leq e(M\#N) \leq \cat(M\#N) \leq \max\{ \cat M, \cat N\}$, we have the claim.
\end{proof}

Rudyak asked if the existence of a map $f:M\to N$, of degree 1, implies the inequality $\cat M \geq \cat N$ \cite{Rudweight},\,\cite[Open problem 2.48]{CLOT}. While not achieving the full result, he was able to prove some partial results.  In particular it follows from the same injective property of $f^*$ (4.2) that $e(M)\geq e(N)$, when such a map exists \cite{Rudweight}.

\begin{rem}\rm We know $e(M \times S^n) \geq e(M)+1$, and there exist examples where $\cat(M\times S^n)=\cat M$ for suitable $M$ and $n$, \cite{Iw98}, \cite{Iw02}.
\end{rem}

\section{Rationalizations}
Here we assume $X$ to be simply connected and denote by $X_\mathbb{Q}$ the rationalization of $X$, see \cite{Fe},\cite{Sull}. We define $e_\mathbb{Q}(X)$ to be the least integer $n$ such that the $n$th fibration $P_nX\to X$ induces an injection in cohomology with coefficients in $\mathbb{Q}$. For $X$ simply connected and of finite type, we have that $e_\mathbb{Q} (X) = e(X_\mathbb{Q})$, \cite{CLOT}.
\\

\begin{prop} For simply connected, CW spaces $X$ and $Y$, $(X \vee Y)_\mathbb{Q} \cong X_\mathbb{Q} \vee Y_\mathbb{Q}$.
\end{prop}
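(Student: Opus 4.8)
The plan is to exploit the universal property of rationalization together with the fact that rationalization commutes with the relevant homotopy colimit. First I would recall that for simply connected CW spaces, the wedge $X \vee Y$ is itself simply connected, so the rationalization $(X \vee Y)_{\mathbb{Q}}$ is defined and is characterized (up to homotopy) as the unique simply connected rational space receiving a map from $X \vee Y$ that is a rational homotopy equivalence, equivalently an isomorphism on $H_*(-;\mathbb{Q})$. So it suffices to produce a map $X \vee Y \to X_{\mathbb{Q}} \vee Y_{\mathbb{Q}}$ whose target is rational and which induces an isomorphism on rational homology.

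Next I would construct the map: let $\ell_X : X \to X_{\mathbb{Q}}$ and $\ell_Y : Y \to Y_{\mathbb{Q}}$ be the localization maps, and take $\ell_X \vee \ell_Y : X \vee Y \to X_{\mathbb{Q}} \vee Y_{\mathbb{Q}}$. The target is simply connected (a wedge of simply connected spaces) and rational: one checks its homotopy groups are $\mathbb{Q}$-vector spaces, which follows since $\pi_i(X_{\mathbb{Q}} \vee Y_{\mathbb{Q}})$ for a wedge of simply connected spaces is built functorially from the rational homotopy of the factors — or, more cleanly, argue on homology and use that a simply connected space with rational homology is rational. Then I would check the homology condition: by the reduced Mayer--Vietoris / wedge axiom, $\widetilde{H}_*(X \vee Y; \mathbb{Q}) \cong \widetilde{H}_*(X;\mathbb{Q}) \oplus \widetilde{H}_*(Y;\mathbb{Q})$ and likewise for the target, and under these identifications $(\ell_X \vee \ell_Y)_*$ is $\ell_{X*} \oplus \ell_{Y*}$, which is an isomorphism since each $\ell$ is a rational homology equivalence. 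Hence $\ell_X \vee \ell_Y$ is a rational homology equivalence between simply connected spaces, so by the uniqueness part of the localization theorem it is a model for the rationalization of $X \vee Y$, giving $(X \vee Y)_{\mathbb{Q}} \simeq X_{\mathbb{Q}} \vee Y_{\mathbb{Q}}$.

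The only real point requiring care — the ``main obstacle,'' such as it is — is verifying that $X_{\mathbb{Q}} \vee Y_{\mathbb{Q}}$ is genuinely a rational space, i.e.\ that wedge preserves the property of being rational. The slick route is homological: $X_{\mathbb{Q}} \vee Y_{\mathbb{Q}}$ is simply connected, and its reduced integral homology is $\widetilde{H}_*(X_{\mathbb{Q}}) \oplus \widetilde{H}_*(Y_{\mathbb{Q}})$, a sum of $\mathbb{Q}$-vector spaces hence a $\mathbb{Q}$-vector space in each degree; a simply connected space whose integral homology is rational (uniquely divisible) is rational, so we are done. I would phrase the writeup to lean on this homological characterization throughout, which sidesteps any need to analyze $\pi_*$ of a wedge directly. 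A parenthetical alternative worth mentioning is the Sullivan-model viewpoint: a minimal model for $X \vee Y$ can be assembled from those of $X$ and $Y$, and rationalizing commutes with this construction, but the homological argument is shorter and self-contained given what the paper has already set up.
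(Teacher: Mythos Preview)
Your proposal is correct and follows essentially the same route as the paper: both arguments use the wedge of localization maps $k=\ell_X\vee\ell_Y:X\vee Y\to X_\mathbb{Q}\vee Y_\mathbb{Q}$ together with the universal/characterizing property of rationalization and a homology check via the wedge decomposition. The paper phrases it by invoking the universal property to factor $k$ through a map $j:(X\vee Y)_\mathbb{Q}\to X_\mathbb{Q}\vee Y_\mathbb{Q}$ and then observing $j$ is a homology isomorphism, whereas you verify directly that $k$ satisfies the defining properties of a rationalization; in particular you make explicit the point the paper leaves implicit, namely that $X_\mathbb{Q}\vee Y_\mathbb{Q}$ is itself a rational space.
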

\begin{proof} In the following diagram, the map $l$ is the localization map of $X\vee Y$, and $k$ is given by the wedge of localization maps on $X$ and $Y$. The map $j$ exists by the universal property of $(X\vee Y)_\mathbb{Q}$, and induces isomorphisms in homology. Hence $X_\mathbb{Q}\vee Y_\mathbb{Q} \cong (X\vee Y)_\mathbb{Q}$

\hspace*{1.5in}\xymatrix{ & X\vee Y  \ar [dr]_k \ar[r]^l & (X\vee Y)_\mathbb{Q} \ar@{.>}[d]^j
\\ & 	& X_\mathbb{Q}\vee Y_\mathbb{Q}}

\end{proof}

In \cite{FHL} it is shown that for a closed, simply connected manifold $M$, $e(M) = e_\mathbb{Q}(M)=\cat (M_\mathbb{Q})$, and hence $\cat M_\mathbb{Q}\leq \cat M$.

\begin{prop}\rm For $M$ and $N$, closed and simply connected manifolds, $\cat (M\#N)_\mathbb{Q} = \max \{ \cat M_\mathbb{Q}, \cat N_\mathbb{Q} \}$.
\end{prop}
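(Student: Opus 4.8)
The plan is to mimic the strategy used for \corref{c:maincor}, but working entirely in the rational category. Recall that for a closed simply connected manifold we have the chain $e(M) = e_{\mathbb Q}(M) = \cat(M_{\mathbb Q})$ from \cite{FHL}, and analogously for $N$; moreover $\cat(M\#N)_{\mathbb Q}$ is controlled by $e((M\#N)_{\mathbb Q}) = e_{\mathbb Q}(M\#N)$. So the inequality we actually need to produce is a double one:
\begin{equation*}
\max\{\cat M_{\mathbb Q}, \cat N_{\mathbb Q}\} \le \cat (M\#N)_{\mathbb Q} \le \max\{\cat M_{\mathbb Q}, \cat N_{\mathbb Q}\},
\end{equation*}
and the two halves are proved by different means.

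First I would establish the lower bound. This is the rational analogue of \propref{Toom}. The collapsing maps $M\#N \to M$ and $M\#N \to N$ have degree one, hence induce injections on rational cohomology by \cite[Theorem V, 2.13]{Rudbook}; running the same diagram chase as in \propref{Toom} but with $\mathbb Q$-coefficients gives $e_{\mathbb Q}(M\#N) \ge \max\{e_{\mathbb Q}(M), e_{\mathbb Q}(N)\}$. Translating through $e_{\mathbb Q}(M\#N) \le \cat(M\#N)_{\mathbb Q}$ (Toomer invariant bounds category, applied to the rationalization) and $e_{\mathbb Q}(M) = \cat M_{\mathbb Q}$, $e_{\mathbb Q}(N) = \cat N_{\mathbb Q}$, this yields $\cat(M\#N)_{\mathbb Q} \ge \max\{\cat M_{\mathbb Q}, \cat N_{\mathbb Q}\}$.

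For the upper bound I would apply \theoref{t:main} to the rationalized picture. With $A = S^{n-1}$ the separating sphere, $(M\#N)/A = M\vee N$, and rationalizing, $(M\vee N)_{\mathbb Q} \cong M_{\mathbb Q}\vee N_{\mathbb Q}$ by the Proposition just proved, so by \remref{r:help}(1), $\cat\big((M\#N)/A\big)_{\mathbb Q} = \max\{\cat M_{\mathbb Q}, \cat N_{\mathbb Q}\}$. Working with the rational spaces $(M\#N)_{\mathbb Q}$ and its subspace $A_{\mathbb Q} = S^{n-1}_{\mathbb Q}$, which is still $(n-2)$-connected, and invoking \propref{p:smallcat} to dispose of the low-category cases so that $\cat(X/A) \ge 3$ (and also noting that $(M\#N)_{\mathbb Q}$ can be taken of formal dimension $n$, so the dimension-connectivity condition $(n-2)+3-1 \ge n$ holds), \theoref{t:main} gives $\cat(M\#N)_{\mathbb Q} \le \max\{\cat M_{\mathbb Q}, \cat N_{\mathbb Q}\}$. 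Combined with the lower bound, equality follows.

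The main obstacle I anticipate is the legitimacy of feeding rationalized spaces into \theoref{t:main}: that theorem was stated for an $n$-dimensional CW space with an $m$-connected subspace, and a rationalization $X_{\mathbb Q}$ is not a finite-dimensional complex. One must either re-examine the proof of \theoref{t:main} to see that only the rational homotopy groups and a homological dimension bound are used — so that a rational cohomological dimension argument via obstruction theory goes through — or, more cleanly, carry out the collapse-and-pullback construction at the level of $M\#N$ first and rationalize afterward, using that rationalization commutes with the relevant cofiber/quotient constructions and preserves the connectivity and cohomological-dimension estimates. I would pursue the latter route, checking that $q\colon M\#N \to M\vee N$ rationalizes to $q_{\mathbb Q}\colon (M\#N)_{\mathbb Q}\to (M_{\mathbb Q}\vee N_{\mathbb Q})$ and that the homotopy-excision/Whitehead argument of \theoref{t:main} survives with $\mathbb Q$-coefficients. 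The remaining steps are routine once this compatibility is in hand.
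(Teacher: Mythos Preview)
Your approach is essentially the paper's: the lower bound is exactly the rational Toomer argument (\propref{Toom} with $\mathbb Q$-coefficients together with the FHL identity $e_{\mathbb Q}=\cat(-)_{\mathbb Q}$), and the upper bound is the connected-sum inequality. For the upper bound the paper is even terser than you are --- it just writes $\cat_{\mathbb Q}(M\#N)\le \max\{\cat_{\mathbb Q}M,\cat_{\mathbb Q}N\}$ and cites \corref{c:maincor} without comment --- so the finite-dimensionality concern you flag about applying \theoref{t:main} to rationalized spaces is legitimate, but the paper does not spell out a resolution either.
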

\begin{proof}
As $M$ and $N$ are closed and simply connected, $M\#N$ is closed and simply connected, and $e_\mathbb{Q}(M\#N) = \cat (M\#N)_\mathbb{Q}$. Combining (3.3) and (4.2) establishes on the left hand side, 
$$\max \{\cat M_\mathbb{Q}, \cat N_\mathbb{Q}\} =\max\{e_\mathbb{Q}(M), e_\mathbb{Q}(N)\}\leq e_\mathbb{Q}(M\#N) = \cat(M\#N)_\mathbb{Q}.$$
While on the right hand side we have, $$ \cat(M\#N)_\mathbb{Q} = \cat_\mathbb{Q}(M\#N)\leq \max (\cat_\mathbb{Q}M, \cat_\mathbb{Q}N)= \max \{\cat M_\mathbb{Q}, \cat N_\mathbb{Q}\},$$ where the middle inequality comes from (3.3).
\end{proof}

Returning to Rudyak's question on a possible relation between degree and category, we can settle it in the rational context. 
\begin{prop}\rm For closed and simply connected $m$-manifolds $M$ and $N$ with $f:M\to N$ of nonzero degree, we have $\cat M_\mathbb{Q} \geq \cat N_\mathbb{Q}$.
\end{prop}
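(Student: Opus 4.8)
The plan is to reduce everything to the Toomer invariant. By the result of \cite{FHL} quoted just above, $\cat M_\mathbb{Q}=e_\mathbb{Q}(M)$ and $\cat N_\mathbb{Q}=e_\mathbb{Q}(N)$, since $M$ and $N$ are closed, simply connected and of finite type. So it suffices to prove the cleaner inequality $e_\mathbb{Q}(N)\le e_\mathbb{Q}(M)$, and this I would deduce exactly as in the proof of \propref{Toom}, with ``degree $1$'' replaced by ``degree $d\ne 0$'' and integral cohomology replaced by rational cohomology.

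The first step is to check that $f$ induces an injection $f^*\colon H^*(N;\mathbb{Q})\to H^*(M;\mathbb{Q})$. Since $M$ and $N$ are simply connected they are orientable; fix orientations, so that $f_*[M]=d[N]$ with $d\ne 0$. Writing $D_X$ for the rational Poincar\'e duality isomorphism of a closed oriented $m$-manifold $X$, set $f_!:=D_N^{-1}\circ f_*\circ D_M\colon H^*(M;\mathbb{Q})\to H^*(N;\mathbb{Q})$; the projection formula $f_*(f^*a\cap[M])=a\cap f_*[M]=d\,(a\cap[N])$ gives $f_!\circ f^*=d\cdot\mathrm{id}$ on $H^*(N;\mathbb{Q})$, so $f^*$ is (split) injective because $d$ is invertible in $\mathbb{Q}$.

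The second step uses the naturality of the Schwarz construction: a map $f\colon M\to N$ induces $PM\to PN$, $\gamma\mapsto f\circ\gamma$, compatible with the endpoint fibrations, and hence a fiberwise map $P_n(f)\colon P_nM\to P_nN$ with $p_n^N\circ P_n(f)=f\circ p_n^M$ (on the nose if $f$ is pointed, otherwise up to homotopy, which is all that cohomology sees). Passing to $H^*(-;\mathbb{Q})$ yields a commuting square relating $(p_n^M)^*$, $(p_n^N)^*$, $f^*$ and $P_n(f)^*$. Now take $n:=e_\mathbb{Q}(M)$, so that $(p_n^M)^*$ is injective with $\mathbb{Q}$-coefficients. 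For $0\ne u\in H^*(N;\mathbb{Q})$ we get $f^*u\ne 0$ by step one, hence $(p_n^M)^*f^*u\ne 0$, hence $P_n(f)^*\bigl((p_n^N)^*u\bigr)=(p_n^M)^*f^*u\ne 0$, and therefore $(p_n^N)^*u\ne 0$. Thus $(p_n^N)^*$ is injective with $\mathbb{Q}$-coefficients, i.e.\ $e_\mathbb{Q}(N)\le n=e_\mathbb{Q}(M)$, whence $\cat N_\mathbb{Q}\le\cat M_\mathbb{Q}$.

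I do not anticipate a genuine obstacle: the argument merely assembles the umkehr property of nonzero-degree maps with the functoriality of the Ganea--Schwarz fibrations, both standard. The one point that wants a moment's care is that in this paper $e_\mathbb{Q}(X)$ is defined through the fibrations $P_nX\to X$ of $X$ itself rather than of $X_\mathbb{Q}$, so it is the naturality square for $f$ read in $\mathbb{Q}$-cohomology that does the work, and the identification $e_\mathbb{Q}(X)=\cat X_\mathbb{Q}$ of \cite{FHL} is invoked only at the very end, where simple connectivity and finite type are in force.
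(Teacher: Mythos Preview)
Your proposal is correct and follows essentially the same route as the paper: reduce to $e_\mathbb{Q}(N)\le e_\mathbb{Q}(M)$ via \cite{FHL}, use injectivity of $f^*$ on rational cohomology for nonzero degree, and chase the naturality square for $p_n$ to push injectivity from $(p_n^M)^*$ to $(p_n^N)^*$. The only differences are cosmetic: you spell out the umkehr argument for injectivity of $f^*$ where the paper simply cites \cite[V.2.13]{Rudbook}, and you make the functoriality of $P_n(-)$ explicit where the paper leaves it implicit in the diagram.
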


\begin{proof} It suffices to show $e_\mathbb{Q}(M) \geq e_\mathbb{Q}(N)$. That is, suppose $p^*:H^*(M;\mathbb{Q})\to H^*(P_n(M)\mathbb{Q})$ in the following diagram is injective. 
\\

\hspace{1.5in}\xymatrix{ & H^*(P_n(M);\mathbb{Q})   & H^*(P_{n}(N);\mathbb{Q}) \ar[l]
\\ & H^*(M;\mathbb{Q})  \ar[u]^{p^*} \ &  H^*(N;\mathbb{Q}) \ar[u]_{p^*} \ar[l]_{f^*}	}

By \cite[V.2.13]{Rudbook}, the map $f^*$ is injective. Since $p^*$ and $f^*$ are injective, the composition $p^*\circ f^*$ is injective, and $p^*:H^*(N;\mathbb{Q})\rightarrow H^*(P_n(N);\mathbb{Q})$ is injective. Thus $e_\mathbb{Q}(N)\leq n$.

\end{proof}

\end{document}